\newtheorem{definition}{Definition}[section]
\newtheorem{theorem}[definition]{Theorem}
\newcommand{\R}{\mathbb{R}}
\newcommand{\abs}[1]{\lvert#1\rvert}
\newcommand{\argmin}{\operatorname*{arg\,min}}
\newcommand{\grid}{\mathcal{G}}
\newcommand{\ind}{\chi}
\newcommand{\minmod}{\operatorname*{minmod}}
\newcommand{\sign}{\operatorname{sign}}
\title{Constrained Reconstruction in MUSCL-type Finite Volume Schemes}
\author{Christoph Gersbacher}
\author{Martin Nolte}
\affil{\small Department of Applied Mathematics, University of Freiburg, \\
  Hermann-Herder-Str. 10, D-79104 Freiburg, Germany.}
\date{}
\begin{document}

\maketitle

\begin{abstract}
In this paper we are concerned with the stabilization of MUSCL-type finite
volume schemes in arbitrary space dimensions. We consider a number of limited
reconstruction techniques which are defined in terms inequality-constrained
linear or quadratic programming problems on individual grid elements. No
restrictions to the conformity of the grid or the shape of its elements are
made. In the special case of Cartesian meshes a novel QP reconstruction is
shown to coincide with the widely used Minmod reconstruction. The accuracy
and overall efficiency of the stabilized second-order finite volume schemes
is supported by numerical experiments.
\end{abstract}

\section{Introduction}

First-order finite volume schemes are a class of discontinuous finite element
methods widely used in the numerical solution of first-order hyperbolic
conservation laws. They are particularly popular for their conservation
properties and their robustness. Finite volume schemes can be applied to
arbitrary shaped grid elements and locally adapted grids while still being easy
to implement. However, due to the large amount of dissipation built into the
first-order scheme, discontinuities in the exact solution may be heavily
smeared out.

In this paper we will be concerned with second-order MUSCL-type finite volume
schemes of formally second order instead. The MUSCL-approach was introduced by
\citet{vanLeer1979} and is based on the reconstruction of piecewise linear
functions from piecewise constant data. The second-order scheme in general
provides a much better resolution than the first-order finite volume method. It
is, however, prone to developing spurious oscillations and unphysical values
that may result in the immediate breakdown of a numerical simulation and
requires a suitable stabilization. A common approach to their stabilization
involves suitable slope limiters to prevent spurious oscillations. Only a few
stabilization techniques are applicable to general unstructured grids in $d$
space dimensions, see, e.g., \cite{Dedner2011}.

Here, we follow a more recent approach to the design of limited reconstruction
operators \cite{Buffard2010, May2013, Chen2016}. We consider a number of
reconstruction operators defined in terms of local inequality-constrained
linear or quadratic minimization problems. No restrictions to the conformity of
the grid or the shape of individual grid elements are imposed. We are able to
prove that in the special case of Cartesian meshes our QP reconstruction
coincides with the $d$-dimensional Minmod reconstruction, which illustrates the
reliability of the stabilized finite volume method. The general approach allows
for a number of modifications, and we briefly discuss a positivity-preserving
stabilization for the Euler equations of gas dynamics.

\section{Stabilization of MUSCL-type finite volume schemes}

In this section we want to briefly revisit MUSCL-type finite volume schemes on
arbitrary meshes. In the following, let $\Omega \subset \R^d$, $d > 0$, be a
bounded domain. We consider a first-order system
\begin{equation} \label{eq:conservation law}
  \begin{aligned}
    \partial_t u + \nabla \cdot F(u) & = 0 \quad \text{in $\Omega \times (0,T)$}, \\
    u(\cdot, 0) & = u_0 \quad \text{in $\Omega$}
  \end{aligned}
\end{equation}
subject to suitable boundary conditions. Here,
\begin{math}
  u: \Omega \times (0,T) \rightarrow \mathcal{U}
\end{math}
is an unknown function with values in the set of states $\mathcal{U} \subset
\R^r$. The function
\begin{math}
  u_0: \Omega \rightarrow \mathcal{U}
\end{math}
denotes some given initial data, and
\begin{math}
   F: \mathcal{U} \rightarrow \R^{d \times r}
\end{math}
is the so-called convective flux.

\paragraph{General notation} %
Now, let $\grid$ be a suitable partition of the computational domain into
closed convex polytopes with non-overlapping interior. For each element $E \in
\grid$ we denote by $\mathcal{N}(E)$ the set of its neighboring elements, i.e.,
\begin{equation*}
  \mathcal{N}(E)
    = \bigl\{E' \in \grid \setminus \{E \} \bigm\vert \dim(E \cap E') = d-1 \bigr\},
\end{equation*}
where for each boundary segment $E \cap \partial \Omega$, $E \in \grid$, we
assume the existence of an exterior, possibly degenerate ghost cell $E' \in
\mathcal{N}(E)$. By $X^k_\grid$ we denote the piecewise polynomial spaces of
order at most $k$ on $\grid$,
\begin{equation*}
  X^k_\grid
    = \bigl\{u \in L^\infty(\Omega, \R^r)
      \bigm\vert u_{\restriction{E}} \in (P^k(E))^r \text{ for all } E \in \grid
      \bigr\}.
\end{equation*}
Furthermore, for $u \in X^0_\grid$ we write
\begin{equation*}
  u(x) = \sum_{E \in \grid} u_E \ind_E(x),
\end{equation*}
where $\ind_E$ denotes the indicator function for the element $E \in \grid$ and
$u_E$ denotes the local cell average. The centroid of a convex polytope $E
\subset \R^d$ will be denoted by $x_E$.

\paragraph{MUSCL-type finite volume schemes} %
Next, we define the general second-order finite volume scheme. For each element
$E \in \grid$ and $E' \in \mathcal{N}(E)$ denote by
\begin{math}
  G_{E,E'}: C^\infty(E \cap E') \times C^\infty(E \cap E') \to \R^r
\end{math}
a conservative numerical flux from $E$ to $E'$ that consistent with $F$, i.e.,
\begin{equation*}
  G_{E,E'}(u, u) = \int_{E \cap E'} F(u)\,\cdot \nu_{E,E'}\,dx
  \quad \text{for all $u \in C^\infty(E \cap E')$},
\end{equation*}
where $\nu_{E,E'} \in \R^d$ denotes the unit outer normal to $E$ on the
intersection $E \cap E'$. After performing a spatial discretization of
\eqref{eq:conservation law}, we seek $u: [0,T] \rightarrow X^0_\grid$ such that
\begin{equation} \label{eqn:muscl_scheme}
\begin{aligned}
  \frac{\mathrm{d}}{\mathrm{d}t} \, u(t)
    & = - \sum_{E \in \grid} \frac{\ind_E}{\abs{E}} \sum_{E' \in \mathcal{N}(E)} G_{E,E'} \bigl(\mathcal{R} u(t)_{\restriction E}, \mathcal{R} u(t)_{\restriction E'}\bigr)
      \quad \text{for $t \in (0, T)$}, \\
  u(0) & = \Pi^0_\grid u_0 := \sum_{E \in \grid} \frac{\ind_E}{\abs{E}} \int_E u_0\,\mathrm{d}x.
\end{aligned}
\end{equation}
Here,
\begin{math}
  \mathcal{R} : X^0_\grid \to X^1_\grid
\end{math}
denotes a reconstruction operator mapping piecewise constant to piecewise
linear data. The operator is assumed to be locally mass-conservative, i.e.,
for all $u \in X^0_\grid$ it holds
\begin{equation*}
  \mathcal{R}u(x_E) = u_E \quad \text{for all $E \in \grid$}.
\end{equation*}
Note that in Equation \eqref{eqn:muscl_scheme} we made implicit use of
so-called ghost values, which must be determined from the given set of boundary
conditions. For the higher-order discretization in time we use a second-order
accurate Runge-Kutta method.

Obviously, the key ingredient to achieving second-order accuracy is the
reconstruction operator $\mathcal{R}$. The design of such operators is a
delicate matter which ultimately will affect the robustness and accuracy of the
overall numerical scheme. The generalization of techniques developed for the
one-dimensional case to multiple space dimensions is not always obvious, in
particular with respect to arbitrary shaped grid elements and possibly
non-conforming grids.

\paragraph{Limited least squares fitted polynomials} %

Arguably the most popular class of stabilized reconstruction techniques is due
to \citet{Barth1989}. It is based on a two-step procedure to be illustrated by
a \emph{limited least squares fit}.

For the sake of simplicity, we restrict the following presentation to scalar
functions. Let $u \in X^0_\grid$ be a piecewise constant function and $E \in
\grid$ a fixed grid cell. We fix non-negative weights $\omega_{E,E'}$, $E' \in
\mathcal{N}(E)$, and define the quadratic functional
\begin{equation} \label{eqn:least_squares_functional}
  J_E(v; u)
    = \sum_{E' \in \mathcal{N}(E)} \frac{\omega_{E,E'}}{2} \big\lvert u_{E'} - v(x_{E'})\big\rvert^2
  \quad \text{for $v \in P^1(\R^d)$}.
\end{equation}
We then compute the minimizing polynomial
\begin{equation*}
  v_E = \argmin_{v \in P^1(\R^d)} J_E(v; u)
    \quad \text{for all $E \in \grid$}
\end{equation*}
subject to the local mass-conservation property
\begin{math}
  v_E(x_E) = u_E.
\end{math}
It is easy to see that the minimization problem is well-posed, if
\begin{equation} \label{eqn:well_posedness_condition}
  \operatorname*{span} \{\omega_{E,E'}\,(x_{E'} - x_E) \mid E' \in \mathcal{N}(E) \} = \R^d.
\end{equation}
Next, we introduce a set of locally admissible linear functions (see, e.g.,
\cite{Hubbard1999}), given by
\begin{multline} \label{eqn:admissible_set}
  \mathcal{W}(E; u) = \bigl\{w \in P^1(\R^d) \mid w(x_E) = u_E \text{ and} \\
    \min\{u_E, u_{E'}\} \leq w(x_{E'}) \leq \max\{u_E, u_{E'} \}
      \text{ for all } E' \in \mathcal{N}(E) \bigr\}.
\end{multline}
Note that, by definition, the set $\mathcal{W}(E;u)$ is convex and non-empty,
since the constant function $w(x) = u_E$ is always admissible. A function
$v \in X^1_\grid$ is called admissible, if
\begin{equation*}
  v_{\restriction E} \in \mathcal{W}(E; \Pi^0_\grid v) \quad \text{for all $E \in \grid$}.
\end{equation*}
Having computed the linear polynomial $v_E$, an inexpensive projection onto the
set of admissible polynomials $\mathcal{W}(E; u)$ is given by a scaling of the
candidate gradient $\nabla v_E$. We define the mapping
\begin{math}
  \mathcal{R} : X^0_\grid \rightarrow X^1_\grid
\end{math}
by
\begin{equation*}
  \mathcal{R} u_{\restriction E}(x)
    = u_E + \alpha_E \nabla v_{\restriction E} \cdot (x - x_E)
      \quad \text{for all $E \in \grid$},
\end{equation*}
where $\alpha_E \in [0,1]$ is chosen maximal such that the image is admissible.
The scalar factor $\alpha_E$ can be computed explicitly.

\section{Constrained linear reconstruction}

In the previous section, limitation has been considered a separate step in the
definition of a stabilized reconstruction operator. Here, we follow a more
recent approach of recovering a suitably bounded approximate gradient in a
single step by means of local minimization problems. For example, \cite{May2013,
Chen2016} proposed to directly reconstruct an admissible solution through a
linear programming (LP) problem.

\begin{definition}[LP reconstruction]
Let $\grid$ be an arbitrary $d$-dimensional grid. The LP reconstruction
operator
\begin{math}
  \mathcal{R}: X^0_\grid \rightarrow X^1_\grid
\end{math}
is defined by
\begin{equation} \label{eqn:l1optimmod}
  \mathcal{R} u_{\restriction{E}} = w_E
    = \argmin_{w \in \mathcal{W}(E;u)}
      \sum_{E' \in \mathcal{N}(E)} \big\lvert u_{E'} - w(x_{E'})\big\rvert
\end{equation}
where the set of locally admissible functions $\mathcal{W}(E;u)$ is given by
Equation \eqref{eqn:admissible_set}.
\end{definition}

This reconstruction has been shown to be equivalent to the following LP problem:
\begin{equation*}
  \begin{aligned}
    & \text{maximize} \quad&
    & \sum_{E' \in \mathcal{N}( E )} \sign( u_{E'} - u_E ) (x_{E'} - x_E) \cdot \nabla w, \\
    & \text{subject to} \quad&
    & 0 \leq \sign(u_{E'} - u_E) \, (x_{E'} - x_E) \cdot \nabla w \leq \lvert u_{E'} - u_E \rvert.
  \end{aligned}
\end{equation*}
In \cite{May2013, Chen2016}, the authors propose to solve this problem by a
variant of the classical simplex algorithm.

The use of the $l^1$-Norm in the objective function in
Equation~\eqref{eqn:l1optimmod} might seem natural in the context of hyperbolic
conservation laws. For the numerical approximation of overdetermined problems,
the use of quadratic objective functions, e.g., least-squares
fits, are more common. The approach we propose may be summarized
as follows: for each grid element $E \in \grid$ we choose a
locally admissible polynomial $w_E \in \mathcal{W}(E;u)$ as the
best admissible fit in a least-squares sense to given piecewise
constant data.

\begin{definition}[QP reconstruction] \label{dfn:optimmod}
Let $\grid$ be an arbitrary $d$-dimensional grid. The QP reconstruction
operator
\begin{math}
  \mathcal{R}: X^0_\grid \rightarrow X^1_\grid
\end{math}
is defined by
\begin{equation} \label{eqn:optimmod}
  \mathcal{R} u_{\restriction{E}} = w_E = \argmin_{w \in \mathcal{W}(E;u)} J_E(w; u),
\end{equation}
where $J_E$ is defined as in Equation~\eqref{eqn:least_squares_functional} and
the set of locally admissible functions $\mathcal{W}(E;u)$ is given by Equation
\eqref{eqn:admissible_set}.
\end{definition}

First, observe that the optimization problem \eqref{eqn:optimmod} is equivalent
to a standard quadratic programming (QP) problem for the approximate gradient.
Indeed, using the notation
\begin{math}
  d_{E,E'} = x_{E'} - x_E
\end{math}
and
\begin{math}
  m_{E,E'} = u_{E'} - u_E
\end{math}
for $E' \in \mathcal{N}(E)$, a linear function $w_E$ is a solution to
\eqref{eqn:optimmod} if and only if $\nabla w_E$ solves the QP problem
\begin{equation} \label{eqn:optimmod_qp}
  \begin{aligned}
    & \text{minimize} \quad&
    & \frac{1}{2} \nabla w \cdot (H \nabla w) - g \cdot \nabla w, \\
    & \text{subject to} \quad&
    & 0 \leq \sign(m_{E,E'}) \, d_{E,E'} \cdot \nabla w \leq \lvert m_{E,E'} \rvert,
  \end{aligned}
\end{equation}
where $\sign(a) \in \{-1,1\}$ denotes the sign of $a \in \R$. The Hessian $H
\in \R^{d \times d}$ and the gradient $g \in \R^d$ of the objective function
are given by
\begin{align*}
  H & = \sum_{E' \in \mathcal{N}(E)} \, \omega_{E,E'} \, d_{E,E'} \otimes d_{E,E'}, \\
  g & = \sum_{E' \in \mathcal{N}(E)} \, \omega_{E,E'} \, m_{E,E'} \, d_{E,E'}.
\end{align*}
The matrix $H$ is positive definite due to assumption
\eqref{eqn:well_posedness_condition} on the choice of weights $\omega_{E,E'}$.
A QP problem can be solved efficiently in a small number of steps. For a
description of the numerical methods the reader is referred to standard
textbooks on constrained optimization.

With a different set of linear constraints, the reconstruction in
Definition~\ref{dfn:optimmod} was also proposed in \cite{Buffard2010}. However,
the authors restrict themselves to conforming triangular grids to compute the
exact solution to the arising QP. In contrast, we propose the use of an
active set strategy to solve the QP problem numerically. As a consequence,
we do not have to impose any restrictions on the grid dimension, the
conformity of the grid or the shape of individual grid elements.

The main result of this section is given in Theorem~\ref{thm:optimmod_minmod}.
In case of Cartesian meshes the QP reconstruction coincides with the well-known
and reliable Minmod limiter. In the following, let $\grid$ denote a
$d$-dimensional Cartesian grid of uniform grid width $h = (h_1, \ldots, h_d)$.
For fixed $E \in \grid$, we will denote its neighbors by $E_i^\pm$, $i = 1,
\ldots, d$, defined by
\begin{equation*}
  E_i^\pm = E \pm h_i\,e_i.
\end{equation*}

\begin{theorem} \label{thm:optimmod_minmod}
Let $\grid$ be a $d$-dimensional Cartesian grid of uniform grid width $h =
(h_1, \ldots, h_d)$, and let $E \in \grid$ be a fixed element. Then, the exact
solution $\nabla w_E$ to the QP problem \eqref{eqn:optimmod_qp} is given by
\begin{equation*}
  \nabla w_E = \sum_{i = 1}^d \minmod\left(\frac{u_{E_i^+} - u_E}{h_i},
    \frac{u_E - u_{E_i^-}}{h_i}\right) \, e_i.
\end{equation*}
In particular, $\nabla w_E$ is independent of the choice of weights
$\omega_{E,E_i^\pm}$.
\end{theorem}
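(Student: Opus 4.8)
The plan is to exploit the fact that on a Cartesian grid the QP problem \eqref{eqn:optimmod_qp} decouples into $d$ independent one-dimensional problems, one for each component of $\nabla w$. The first step is to record that for the neighbor $E_i^\pm$ one has $d_{E,E_i^\pm} = \pm h_i\,e_i$, so that
\begin{equation*}
  H = \sum_{i=1}^d \bigl(\omega_{E,E_i^+} + \omega_{E,E_i^-}\bigr)\,h_i^2\; e_i \otimes e_i
\end{equation*}
is diagonal with strictly positive diagonal entries $\eta_i := (\omega_{E,E_i^+} + \omega_{E,E_i^-})\,h_i^2$, and the objective in \eqref{eqn:optimmod_qp} splits as $\sum_{i=1}^d\bigl(\tfrac12 \eta_i s_i^2 - g_i s_i\bigr)$, where $s_i := e_i\cdot\nabla w$ and $g_i := e_i\cdot g$. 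Since $d_{E,E_i^\pm}\cdot\nabla w = \pm h_i s_i$ involves $s_i$ only, the constraints separate too: writing $a_i := (u_{E_i^+}-u_E)/h_i$ and $b_i := (u_E - u_{E_i^-})/h_i$, dividing through by $h_i>0$ and using $\sign(-c)=-\sign(c)$ turns the constraints attached to $E_i^+$ and $E_i^-$ into
\begin{equation*}
  0 \le \sign(a_i)\,s_i \le \abs{a_i}
  \qquad\text{and}\qquad
  0 \le \sign(b_i)\,s_i \le \abs{b_i}.
\end{equation*}
I would then observe that the feasible set for $s_i$ is the intersection of the closed interval between $0$ and $a_i$ with the closed interval between $0$ and $b_i$, which is exactly the closed interval $I_i$ with endpoints $0$ and $\minmod(a_i,b_i)$ — collapsing to $\{0\}$ precisely when $a_i, b_i$ have opposite signs or one of them vanishes.

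Next I would minimize the strictly convex parabola $\tfrac12\eta_i s_i^2 - g_i s_i$ over $I_i$. Its free minimizer is $s_i^\ast = g_i/\eta_i$, and a short computation gives $g_i = h_i^2(\omega_{E,E_i^+}a_i + \omega_{E,E_i^-}b_i)$, hence
\begin{equation*}
  s_i^\ast = \frac{\omega_{E,E_i^+}\,a_i + \omega_{E,E_i^-}\,b_i}{\omega_{E,E_i^+} + \omega_{E,E_i^-}},
\end{equation*}
a convex combination of $a_i$ and $b_i$ (the denominator is positive by the well-posedness assumption \eqref{eqn:well_posedness_condition}). By strict convexity, the constrained minimizer over $I_i$ is the metric projection of $s_i^\ast$ onto $I_i$. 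When $\sign(a_i)=\sign(b_i)\ne 0$, the point $s_i^\ast$ lies on the segment joining $a_i$ and $b_i$, hence — since these share a sign — on the far side of the endpoint $\minmod(a_i,b_i)$ of $I_i$ as seen from $0$, so the projection returns $\minmod(a_i,b_i)$; otherwise $I_i=\{0\}$ and the projection is $0 = \minmod(a_i,b_i)$. Summing the components yields $\nabla w_E = \sum_{i=1}^d \minmod(a_i,b_i)\,e_i$, which manifestly does not depend on the weights.

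The hard part, such as it is, is the sign bookkeeping in that final case distinction: one must verify that a \emph{convex} combination of $a_i$ and $b_i$ is never pushed strictly between $0$ and $\minmod(a_i,b_i)$, so that clipping to $I_i$ lands exactly on the $\minmod$ value (this is where convexity of the combination, rather than an arbitrary affine one, is used). The other point requiring a little care is the translation of the abstract constraints into the intervals above, where the orientation $d_{E,E_i^-} = -h_i e_i$ flips a sign and replaces $u_{E_i^-}-u_E$ by $u_E - u_{E_i^-}$; beyond that, everything reduces to separation of variables together with an elementary one-dimensional projection, and uniqueness of $\nabla w_E$ is automatic from strict convexity of the objective on a convex feasible set.
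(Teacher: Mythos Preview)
Your proof is correct and follows essentially the same route as the paper: you exploit the diagonality of $H$ on a Cartesian mesh to decouple \eqref{eqn:optimmod_qp} into $d$ one-dimensional box-constrained quadratics, compute the unconstrained minimizer as a convex combination of the forward and backward slopes, and observe that this combination always overshoots the tighter of the two bounds so that clipping returns the minmod value. The only differences are presentational---you phrase the last step as a metric projection onto the interval $I_i=[0,\minmod(a_i,b_i)]$ rather than comparing against the two constraints separately---but the underlying argument is identical.
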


\begin{proof}
For a Cartesian grid, we have $d_{E,E_i^\pm} = \pm h_i\,e_i$ and the Hessian
$H$ becomes a diagonal matrix with
\begin{equation*}
  H_{ii} = h_i^2\,(\omega_i^+ + \omega_i^-),
\end{equation*}
where we denoted $\omega_i^\pm = \omega_{E,E_i^\pm}$. Similarly, using the
notations $m_i^\pm = m_{E,E_i^\pm}$, the inequality constraints simplify to box
constraints for $\partial_i w$. Therefore, the optimization problem is
equivalent to the $d$ one-dimensional quadratic problems
\begin{equation*}
  \begin{aligned}
    & \text{minimize} \quad&
    & J_i(\partial_i w) = \frac{\omega_i^+ + \omega_i^-}{2}\,(h_i\,\partial_i w)^2 - (\omega_i^+\,s_i^+\,\abs{m_i^+} + \omega_i^-\,s_i^-\,\abs{m_i^-})\,h_i\,\partial_i w \\
    & \text{subject to} \quad&
    & 0 \leq s_i^\pm\,\partial_i w \leq \frac{1}{h_i}\,\lvert m_i^\pm \rvert,
  \end{aligned}
\end{equation*}
where we denote $s_i^\pm = \pm \sign(m_i^\pm)$. Now, if $s_i^+ \neq s_i^-$ or
either of $m_i^\pm$ vanishes, the constraints require $\partial_i w = 0$, which
agrees with the Minmod limiter.

Otherwise, let $s_i = s_i^+ = s_i^-$. The global minimum of each functional
$J_i$ is attained for
\begin{equation*}
  s_i\,\partial_i w
    = \frac{1}{h_i} \frac{\omega_i^+\,\abs{m_i^+} + \omega_i^-\,\abs{m_i^-}}{\omega_i^+ + \omega_i^-}
    \ge \frac{1}{h_i} \min\bigl\{\abs{m_i^+}, \abs{m_i^-} \bigr\}.
\end{equation*}
The opposite inequality follows directly from the constraints and we conclude
\begin{equation*}
  \partial_i w
    = \frac{s_i}{h_i} \min\bigl\{\abs{m_i^+}, \abs{m_i^-} \bigr\}
    = \frac{1}{h_i} \minmod(m_i^+, m_i^-),
\end{equation*}
which proves the statement.
\qed
\end{proof}

\section{Numerical results}

In this section we want to study the accuracy and efficiency of the QP
reconstruction, Definition~\ref{dfn:optimmod}. To this end, generic
implementations of all reconstruction operators discussed in this paper were
written within the \textsc{Dune} framework \cite{Bastian2008, Blatt2016}. For
the computations in Section~\ref{sct:euler_results}, the parallel grid library
\textsc{Dune-ALUGrid} \cite{Alkaemper2016} was used.

\subsection{Nonlinear problem admitting a smooth solution}

The first benchmark problem is taken from \cite[Chapter 3.5]{Kroener1997}. We
consider in the unit square $\Omega = (0,1)^2$ a nonlinear balance law
\begin{equation*}
  \partial_t u + \partial_1 u^2 + \partial_2 u^2 = s
    \quad \text{in $\Omega \times (0,\tfrac{3}{10})$}.
\end{equation*}
We want to numerically recover a prescribed smooth solution given by
\begin{equation*}
  u(x,t) = \frac{1}{5} \sin\left(2\pi(x_1 - t)\right)\sin\left(2\pi(x_2 - t)\right).
\end{equation*}
Note that the right hand side of Equation~\eqref{eqn:muscl_scheme} must be
extended by the discrete source term. The Initial data, source term and
Dirichlet boundary values are then determined from the prescribed solution.

\begin{figure}[b]
  \centering
  \includegraphics[width=0.33\linewidth]{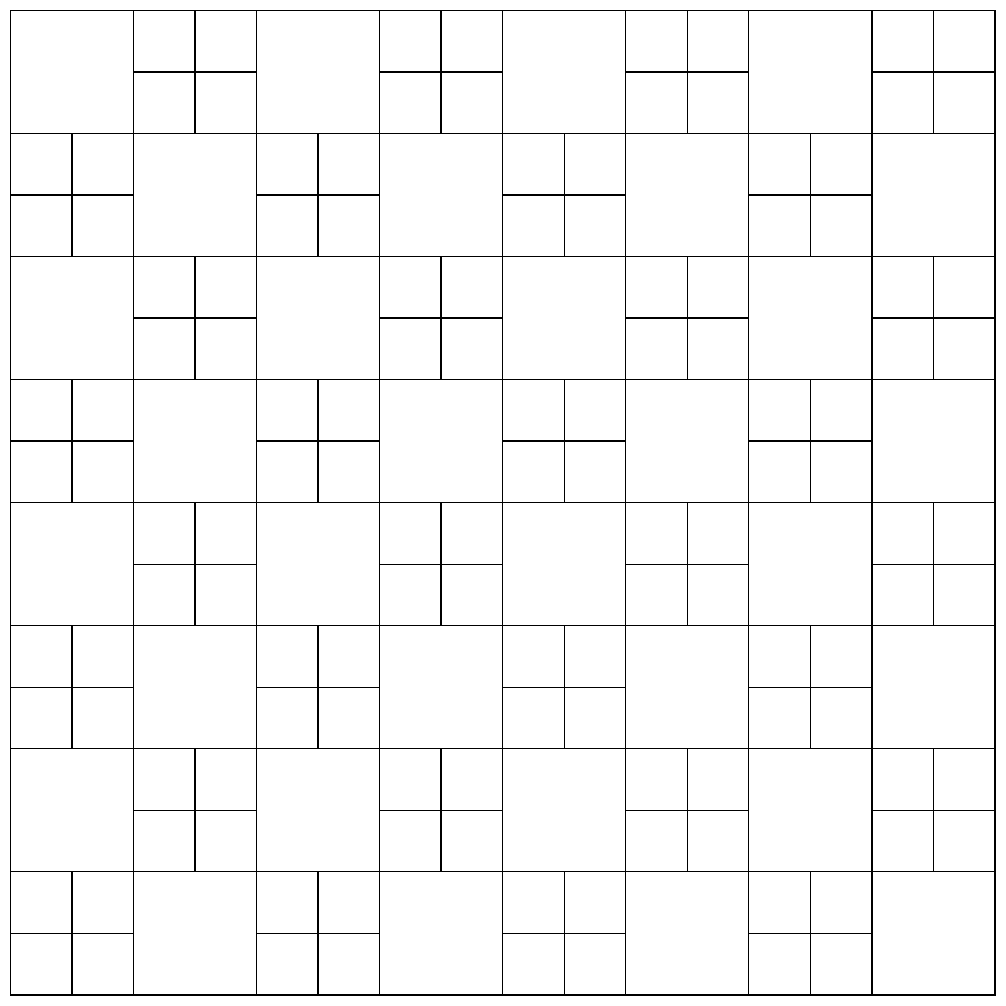} \quad
  \includegraphics[width=0.33\linewidth]{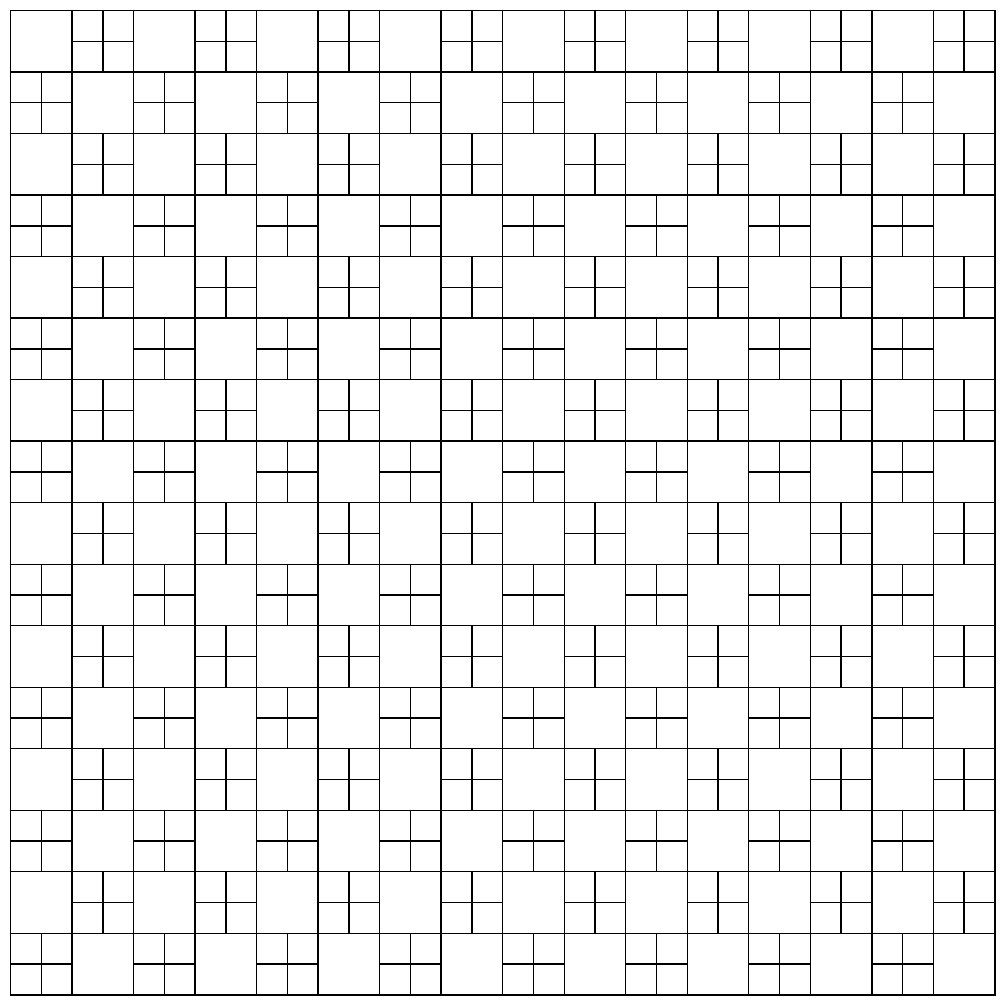}
  \caption{
    Non-conforming quadrilateral meshes with $160$ and $640$ elements, generated
    by refining every second element of a Cartesian grid.
 }
  \label{fig:rotation_checkerboard}
\end{figure}

We consider two different types of domain discretizations: a series of
conforming triangular grids generated through refinement of a coarse Delaunay
triangulation with \num{123} elements and a series of non-conforming
quadrilateral grids resulting from a checkerboard-like refinement rule,
Figure~\ref{fig:rotation_checkerboard}. This latter is
particularly challenging as the number of non-conformities grows
linearly in the number of elements.

\begin{table}[t]
  \centering
  \caption{Nonlinear benchmark problem admitting a smooth solution,
    $L^1$-errors and convergence rates at final time $t = \tfrac{3}{10}$.}
  \label{tbl:nonlinear_benchmark}

  \begin{subtable}{\linewidth}
  \centering
  \caption{Conforming unstructured triangular meshes.}

  \begin{tabular}{rcS[table-format=1.2e-2]S[table-format=1.2]cS[table-format=1.2e-2]S[table-format=1.2]cS[table-format=1.2e-2]S[table-format=1.2]}
  \toprule
    & & \multicolumn{2}{c}{Limited LSF}& & \multicolumn{2}{c}{QP Reconstruction}& & \multicolumn{2}{c}{LP Reconstruction} \\ \cline{3-4} \cline{6-7} \cline{9-10}\noalign{\smallskip}
    {Elements}& & {$\|u - u_\mathcal{G}\|_{L^1}$} & {EOC}& & {$\|u - u_\mathcal{G}\|_{L^1}$} & {EOC}& & {$\|u - u_\mathcal{G}\|_{L^1}$} & {EOC} \\
  \midrule
    \num{123} & & 1.35e-02 & {---} & & 9.86e-03 & {---} & & 1.08e-02 & {---} \\
    \num{492} & & 5.59e-03 & 1.27 & & 3.09e-03 & 1.68 & & 3.75e-03 & 1.52 \\
    \num{1968} & & 2.24e-03 & 1.32 & & 9.09e-04 & 1.76 & & 1.15e-03 & 1.71 \\
    \num{7872} & & 1.05e-03 & 1.10 & & 2.61e-04 & 1.80 & & 3.58e-04 & 1.68 \\
    \num{31488} & & 5.36e-04 & 0.97 & & 7.31e-05 & 1.84 & & 1.07e-04 & 1.74 \\
  \bottomrule
  \end{tabular}
  \end{subtable} \\[\medskipamount]
  \begin{subtable}{\linewidth}
  \centering
  \caption{Non-conforming quadrilateral meshes.}
  \begin{tabular}{rcS[table-format=1.2e-2]S[table-format=1.2]cS[table-format=1.2e-2]S[table-format=1.2]cS[table-format=1.2e-2]S[table-format=1.2]}
  \toprule
    & & \multicolumn{2}{c}{Limited LSF}& & \multicolumn{2}{c}{QP Reconstruction}& & \multicolumn{2}{c}{LP Reconstruction} \\ \cline{3-4} \cline{6-7} \cline{9-10}\noalign{\smallskip}
    {Elements}& & {$\|u - u_\mathcal{G}\|_{L^1}$} & {EOC}& & {$\|u - u_\mathcal{G}\|_{L^1}$} & {EOC}& & {$\|u - u_\mathcal{G}\|_{L^1}$} & {EOC} \\
  \midrule
    \num{160} & & 1.87e-02 & {---} & & 1.22e-02 & {---} & & 1.24e-02 & {---} \\
    \num{640} & & 8.22e-03 & 1.19 & & 4.44e-03 & 1.46 & & 4.44e-03 & 1.48 \\
    \num{2560} & & 3.18e-03 & 1.37 & & 1.30e-03 & 1.78 & & 1.30e-03 & 1.77 \\
    \num{10240} & & 1.45e-03 & 1.13 & & 4.17e-04 & 1.63 & & 4.21e-04 & 1.63 \\
    \num{40960} & & 7.27e-04 & 1.00 & & 1.49e-04 & 1.48 & & 1.51e-04 & 1.48 \\
  \bottomrule
  \end{tabular}
  \end{subtable}
\end{table}

Table~\ref{tbl:nonlinear_benchmark} shows the $L^1$-errors and convergence
rates for a second-order Lax-Friedrichs scheme using the three different
reconstruction operators. Observe that in all cases the simple limited least
squares fit results in a first-order approximation only. The QP and LP
reconstructions give much better results with the QP reconstruction having a
slight edge over the LP reconstruction in case of triangular meshes; in case of
the highly non-conforming quadrilateral meshes, however, the approximation
order of both reconstructions drops to around $1.5$.

\subsection{Linear problem}

In this section we are interested in the efficiency of the QP and LP
reconstructions. To this end, we consider the well-known \emph{solid body
rotation} benchmark problem proposed by \citet{LeVeque1996}. In the unit
square $\Omega = (0,1)^2$ we consider the counterclockwise rotation about the
center with periodicity $T = 2\pi$,
\begin{align*}
  \partial_t u + \nabla \cdot(\bm{v} u) & = 0 \quad \text{in} \Omega \times (0,T), \\
  \bm{v}(x,y) & = \bigl(\tfrac{1}{2} - y, x - \tfrac{1}{2}\bigr).
\end{align*}
The initial data consists of a slotted cylinder, a cone and a smooth hump, each
of which is restricted to a circular domain of radius $r = 0.15$,
\begin{equation*}
  u_0(x) = \begin{cases}
      1 - \chi_{[0.475, 0.525]}(x_1)\,\chi_{[0,0.85]}(x_2)
        & \text{if } \abs{x - x_s} \leq r, \\
      1 - \frac{\abs{x - x_c}}{r}
        & \text{if } \abs{x - x_c} \leq r, \\
      \frac{1}{4} + \frac{1}{4}\cos\left(\pi \frac{\abs{x - x_h}}{r}\right)
        & \text{if } \abs{x - x_h} \leq r, \\
      0 & \text{otherwise}.
    \end{cases}
\end{equation*}
where $x_s = (0.5, 0.75)$, $x_c = (0.5, 0.25)$ and $x_h = (0.25, 0.5)$, respectively.

Figure~\ref{fig:benchmark_runtime_vs_error} shows two plots of the $L^1$-errors
over the computation time. We used the exact same series of triangular and
quadrilateral meshes as in the previous section. In both cases the LP and QP
reconstructions perform much better than the limited least squares fit. While
the latter is easy to implement and inexpensive to compute, solving a quadratic
or linear minimization problem on each cell results in a more efficient scheme
even in the presence of strong discontinuities.

\begin{figure}[t]
  \centering
  \includegraphics[width=0.475\linewidth]{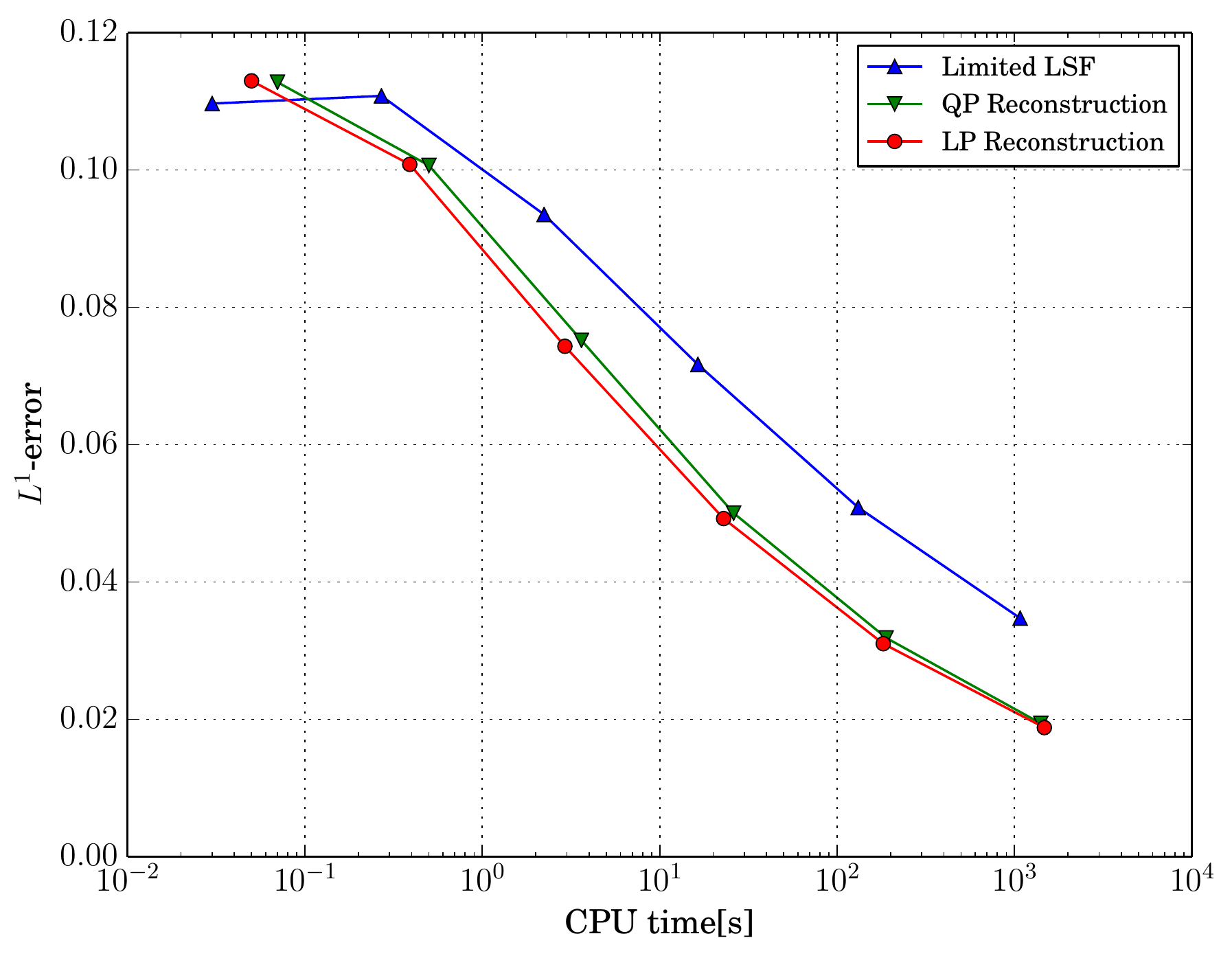}
  \includegraphics[width=0.475\linewidth]{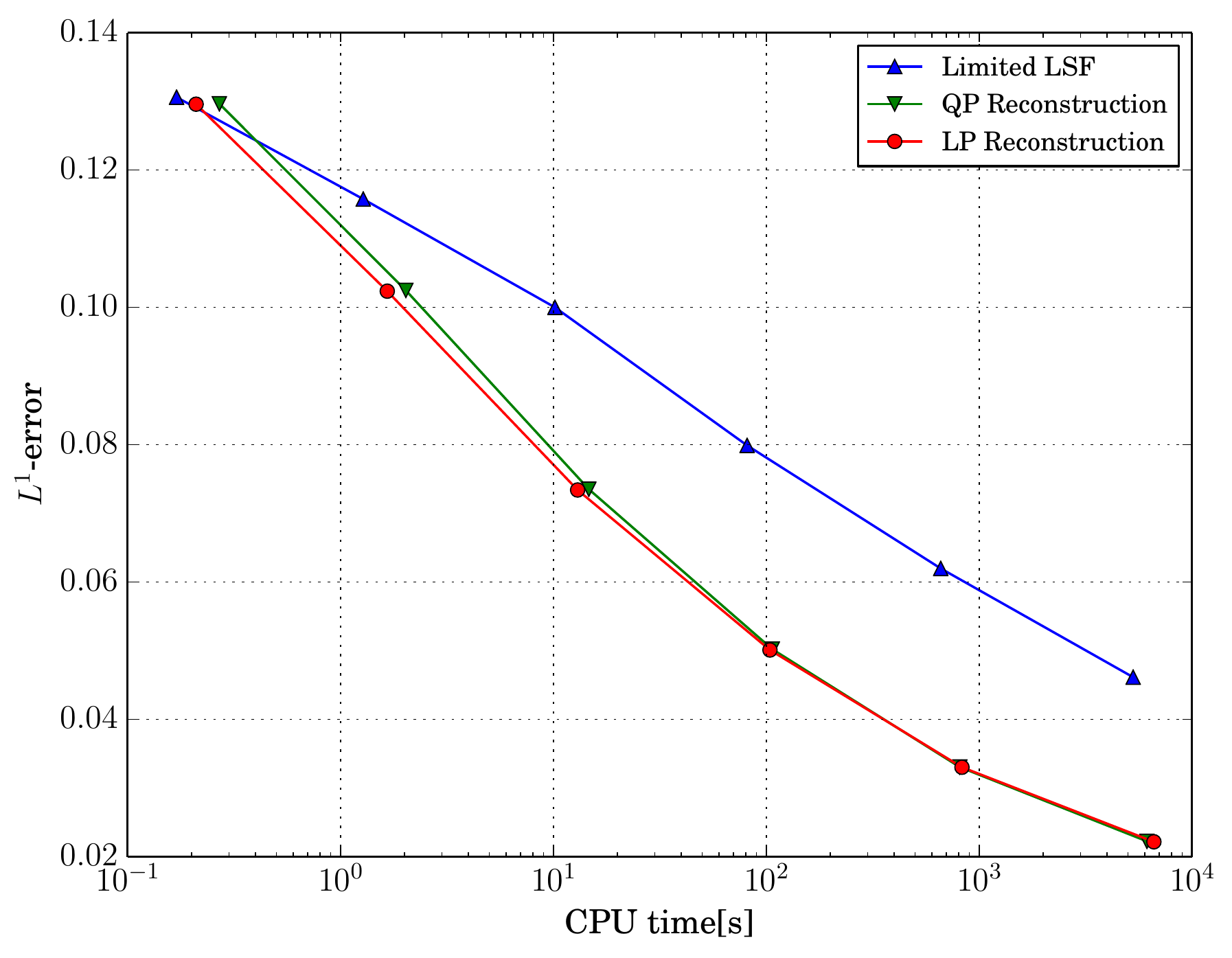}
  \caption{Efficiency of different reconstructions on two
      sequences of meshes of varying resolution for the solid body rotation
      benchmark problem.}
  \label{fig:benchmark_runtime_vs_error}
\end{figure}

\subsection{Euler equations of gas dynamics}
\label{sct:euler_results}

In this final section we consider the $d$-dimensional Euler equations of gas
dynamics,
\begin{equation*}
  \partial_t U + \nabla \cdot F(U) = 0,
\end{equation*}
where
\begin{math}
  U = (\rho, \rho v, E) \in \R^{d+2}
\end{math}
is the vector of conserved quantities, i.e., the density $\rho$, the momentum
$\rho v$, and the total energy density $E$. By $v \in \R^d$ we denote the
primitive particle velocity. The convective flux is given by
\begin{equation*}
  F(U) = \begin{pmatrix}
           \rho v \\
           \rho v \otimes v + p \, \mathbb{I}_d \\
           (E + p) v
         \end{pmatrix} \in \R^{(d + 2) \times d},
\end{equation*}
where $\mathbb{I}_d$ denotes the $d$-dimensional identity matrix.
The pressure $p = p(U)$ is given by the equation of state
\begin{math}
  p(U) = (\gamma - 1)\left(E - \tfrac{\rho}{2} \abs{v}^2 \right)
\end{math}
with the adiabatic constant $\gamma = \num{1.4}$. A state $U$ is considered
physical if the density $\rho$ and the pressure $p$ are strictly positive.
The set of states is thus given by
\begin{equation*}
  \mathcal{U} = \bigl\{U \in \R^{d+2} \mid \rho > 0 \text{ and } p > 0 \bigr\}.
\end{equation*}

It is well-known that unphysical values in an approximate solution typically
lead to the immediate break-down of a numerical simulation. Therefore, we
use a further stabilized modification of the QP reconstruction,
Definition~\ref{dfn:optimmod}, to the stabilization of the finite volume
scheme \eqref{eqn:muscl_scheme} applied to the vector of primitive
variables $W = (\rho, v, E)$. For all $E \in \grid$ we consider a modified
set of locally admissible linear functions
\begin{multline*}
  \mathcal{W}'(E;u)
    = \bigl\{W \in [P^1(\R^d)]^{d+2} \mid W(x_E) = W_E \text{ and } \\
    \min\{W_E, W_{E,E'}\} \leq W(x_{E,E'}) \leq \max\{W_E, W_{E,E'} \}
      \text{ for all } E' \in \mathcal{N}(E) \bigr\},
\end{multline*}
such that all admissible polynomials are physical at least in the midpoints of
inter-element intersections. To this end we define the intermediate values
$W_{E,E'}$, $E' \in \mathcal{N}(E)$, $e = E \cap E'$,
\begin{equation*}
  W_{E,E'}
    = \frac{\abs{x_{E'} - x_{e}}}{\abs{x_E - x_{e}} + \abs{x_{E'} - x_{e}}}\,W_E
      + \frac{\abs{x_E - x_{e}}}{\abs{x_E - x_{e}} + \abs{x_{E'} - x_{e}}} W_{E'}.
\end{equation*}
We define the positivity-preserving reconstruction operator
\begin{math}
  \mathcal{R}': X^0_\grid \rightarrow X^1_\grid
\end{math}
such that for each element $E \in \grid$ the reconstruction is the best
positivity-preserving $w_E \in \mathcal{W}'(E;u)$ that fits the piecewise
constant data in a least-squares sense.

We are interested in the solution of the three-dimensional shock tube
experiment. The computational domain is a cylinder
\begin{math}
  \Omega = \bigl\{x \in \R^3 \mid -1 < x_1 < 1 \text{ and } \sqrt{x_2^2 + x_3^2} < 0.2 \bigr\}.
\end{math}
In primitive variables, the initial data is given by
\begin{equation*}
  W(\cdot, 0) = \begin{cases}
      W_L & \text{if $x_1 < 0$}, \\
      W_R & \text{otherwise}.
    \end{cases}
\end{equation*}
The left and right states $W_L, W_R$ for two different test settings are chosen
as in \cite[Chapter 4.3]{Toro1997}, see Table~\ref{tbl:riemann_data}. The
first test problem is known as Sod test problem; its solution consists of a
left rarefaction, a contact discontinuity and a right shock. The second test
problem is the so-called \emph{123} problem; in this case the solution
consists of two rarefactions and a trivial stationary contact discontinuity.
The latter benchmark test is particularly challenging as small densities and
pressures occur. We solve the Riemann problems for times $t \leq 0.5$ in case
of the Sod problem, and for times $t \leq 0.15$ in case of the \emph{p123}
problem. At the left and right boundary, we prescribe $W = W_L$ and $W = W_R$,
respectively. Otherwise, slip boundary conditions are imposed. The solutions
can be computed in a quasi-exact manner as in case of the one-dimensional
Euler equations, see, e.g., \cite{Toro1997}.

\begin{table}[t]
  \centering
  \caption{Initial left and right states in primitive variables for the
    three-dimensional shock tube experiments.}
  \label{tbl:riemann_data}

  \begin{tabular}{p{2cm}S[table-format=1.0]S[table-format=-1.0]S[table-format=1.0]S[table-format=1.0]S[table-format=1.1]cS[table-format=1.3]S[table-format=1.0]S[table-format=1.0]S[table-format=1.0]S[table-format=1.1]}
  \toprule
     & \multicolumn{5}{c}{$W_L$} & & \multicolumn{5}{c}{$W_R$} \\ \cline{2-6} \cline{8-12} \noalign{\smallskip}
     & {$\rho$} & {$v_1$} & {$v_2$} & {$v_3$} & {$p$} & & {$\rho$} & {$v_1$} & {$v_2$} & {$v_3$} & {$p$} \\
  \midrule
    Sod problem & 1 & 0 & 0 & 0 & 1 & & 0.125 & 0 & 0 & 0 & 0.1 \\
    \emph{p123} problem & 1 & -2 & 0 & 0 & 0.4 & & 1 & 2 & 0 & 0 & 0.4 \\
  \bottomrule
  \end{tabular}
\end{table}

\begin{figure}[b]
  \centering
  \includegraphics[width=0.5\linewidth]{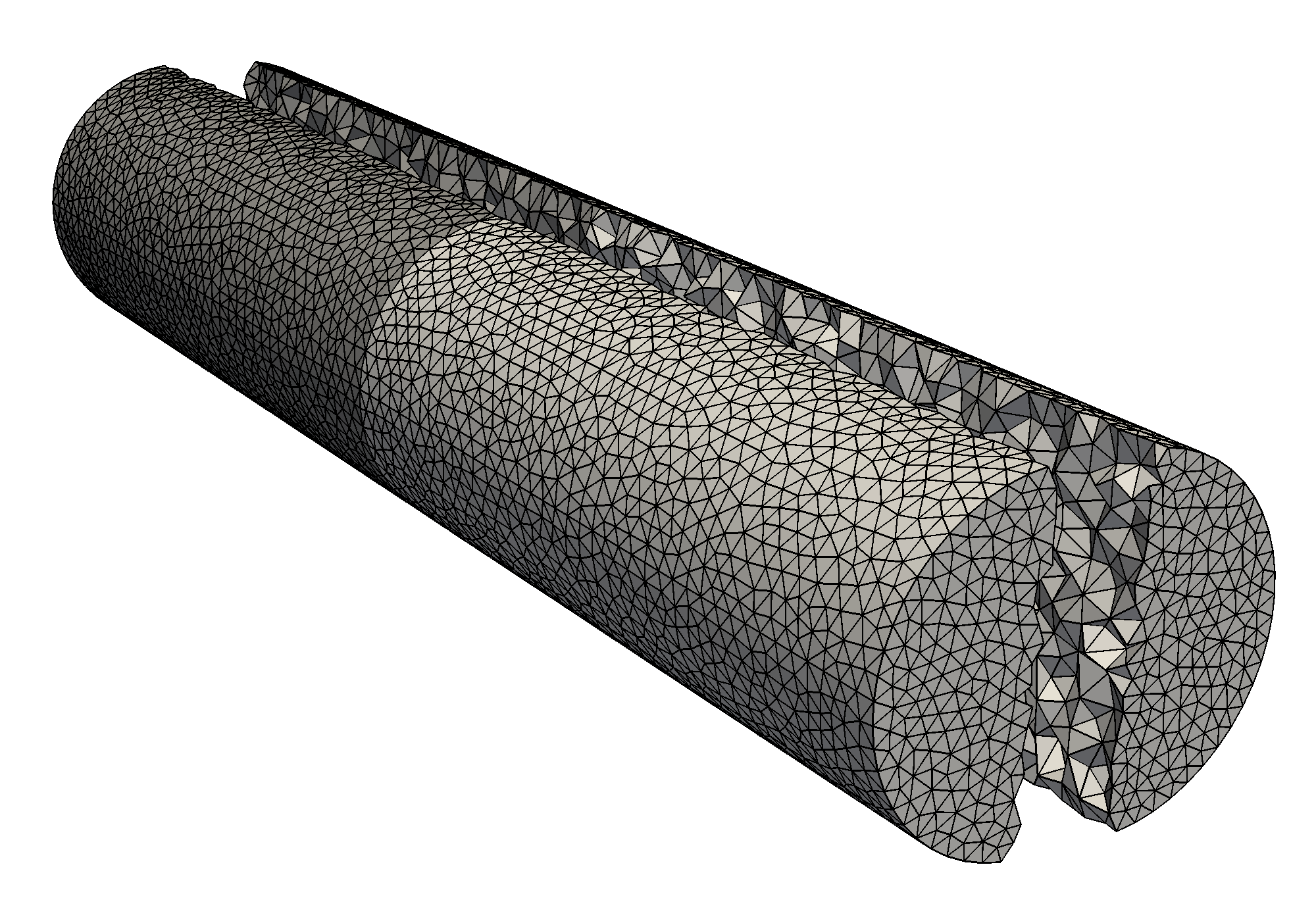}
  \caption{Unstructured tetrahedral mesh for the discretization of the
    three-dimensional shock tube.}
  \label{fig:shock_tube}
\end{figure}

The computational domain is discretized by a series of unstructured, affine
tetrahedral grids. Instead of refining the coarsest grid several times, each
grid was created separately using the Gmsh mesh generator \cite{Geuzaine2009}.
It was ensured that the discontinuity in the initial data is exactly resolved
by the grid. An example grid is shown in Figure~\ref{fig:shock_tube}. For the
MUSCL-type scheme, we use a Harten-Lax-van Leer (HLL) numerical flux. For each
grid we computed the $L^1$-errors against the quasi-exact solution in primitive
variables; the results are shown in Table~\ref{tbl:shock_tube}. The
experimental order of convergence of roughly $0.85$ for the Sod problem and
$0.8$ for the \emph{p123}-problem is significantly higher than the $0.5$
expected from a first-order scheme.

\begin{table}[t]
  \centering
  \caption{Three-dimensional shock tube experiment, $L^1$-errors and
    convergence rates against the quasi-exact solutions to the Riemann problems
    under consideration.}
  \label{tbl:shock_tube}

  \begin{tabular}{rcS[table-format=1.2e-2]cS[table-format=1.2e-2]S[table-format=1.2]cS[table-format=1.2e-2]S[table-format=1.2]}
  \toprule
    & & & & \multicolumn{2}{c}{Sod problem}& & \multicolumn{2}{c}{\emph{p123} problem} \\ \cline{5-6} \cline{8-9}\noalign{\smallskip}
    {Elements} & & {$h_\grid$} & & {$\|W - W_\grid\|_{L^1}$} & {EOC} & & {$\|W - W_\grid\|_{L^1}$} & {EOC} \\
  \midrule
    \num{288} & & 3.48e-01 & & 4.05e-02 & {---} & & 6.21e-02 & {---} \\
    \num{1680} & & 2.13e-01 & & 2.57e-02 & 0.93 & & 4.39e-02 & 0.71 \\
    \num{9937} & & 1.11e-01 & & 1.46e-02 & 0.86 & & 2.54e-02 & 0.83 \\
    \num{71788} & & 5.66e-02 & & 8.49e-03 & 0.81 & & 1.48e-02 & 0.81 \\
    \num{559593} & & 2.88e-02 & & 4.73e-03 & 0.87 & & 8.55e-03 & 0.82 \\
    \num{4396447} & & 1.50e-02 & & 2.65e-03 & 0.89 & & 5.28e-03 & 0.74 \\
  \bottomrule
  \end{tabular}
\end{table}

\section{Conclusion}

In this paper we proposed a new QP reconstruction method for MUSCL-type finite
volume schemes. For each grid element we computed the best admissible fit in a
least-squares sense. We showed that the QP reconstruction generalizes the
multidimensional Minmod reconstruction for Cartesian meshes. No restrictions to
the grid dimension, the conformity of the grid or the shape of individual grid
elements were made. The local cell problems only involve data associated with
direct neighbors of a grid element and thus preserve the locality of the
numerical method. We compared our reconstruction against similar techniques
proposed in the literature. By numerical experiments, we showed that the
minimization problems are indeed inexpensive to solve and yield a more accurate
and efficient approximation.

\bibliographystyle{abbrvnat}
\bibliography{preprint}

\begin{thebibliography}{14}
\providecommand{\natexlab}[1]{#1}
\providecommand{\url}[1]{\texttt{#1}}
\expandafter\ifx\csname urlstyle\endcsname\relax
  \providecommand{\doi}[1]{doi: #1}\else
  \providecommand{\doi}{doi: \begingroup \urlstyle{rm}\Url}\fi

\bibitem[Alk\"amper et~al.(2016)Alk\"amper, Dedner, Kl\"ofkorn, and
  Nolte]{Alkaemper2016}
M.~Alk\"amper, A.~Dedner, R.~Kl\"ofkorn, and M.~Nolte.
\newblock The {DUNE-{ALU}Grid} module.
\newblock \emph{{Arch. Numer. Softw.}}, 4\penalty0 (1):\penalty0 1--28, 2016.

\bibitem[Barth and Jespersen(1989)]{Barth1989}
T.~J. Barth and D.~C. Jespersen.
\newblock The design and application of upwind schemes on unstructured meshes.
\newblock Preprint AIAA-89-0366, American Institute of Aeronautics and
  Astronautics, 1989.

\bibitem[Bastian et~al.(2008)Bastian, Blatt, Dedner, Engwer, Kl\"ofkorn,
  Ohlberger, and Sander]{Bastian2008}
P.~Bastian, M.~Blatt, A.~Dedner, C.~Engwer, R.~Kl\"ofkorn, M.~Ohlberger, and
  O.~Sander.
\newblock A generic grid interface for parallel and adaptive scientific
  computing. part {I}: Abstract framework.
\newblock \emph{{Computing}}, 82\penalty0 (2--3):\penalty0 103--119, 2008.

\bibitem[Blatt et~al.(2016)Blatt, Burchardt, Dedner, Engwer, Fahlke, Flemish,
  Gersbacher, Gr\"aser, Gruber, Gr\"uninger, Kempf, Kl\"ofkorn, Malkmus,
  M\"uthing, Nolte, Piatowski, and Sander]{Blatt2016}
M.~Blatt, A.~Burchardt, A.~Dedner, C.~Engwer, J.~Fahlke, B.~Flemish,
  C.~Gersbacher, C.~Gr\"aser, F.~Gruber, C.~Gr\"uninger, D.~Kempf,
  R.~Kl\"ofkorn, T.~Malkmus, S.~M\"uthing, M.~Nolte, M.~Piatowski, and
  O.~Sander.
\newblock The {Distributed} and {Unified Numerics Enviromment}, version 2.4.
\newblock \emph{{Arch. Numer. Softw.}}, 4\penalty0 (100):\penalty0 13--29,
  2016.

\bibitem[Buffard and Clain(2010)]{Buffard2010}
T.~Buffard and S.~Clain.
\newblock Monoslope and multislope {MUSCL} methods for unstructured meshes.
\newblock \emph{{J. Comput. Phys.}}, 229\penalty0 (10):\penalty0 3745--3776,
  May 2010.

\bibitem[Chen and Li(2016)]{Chen2016}
L.~Chen and R.~Li.
\newblock An integrated linear reconstruction for finite volume scheme on
  unstructured grids.
\newblock \emph{{J. Sci. Comput.}}, 68\penalty0 (3):\penalty0 1172--1197, 2016.

\bibitem[Dedner and Kl{\"o}fkorn(2011)]{Dedner2011}
A.~Dedner and R.~Kl{\"o}fkorn.
\newblock A generic stabilization approach for higher order discontinuous
  {Galerkin} methods for convection dominated problems.
\newblock \emph{{J. Sci. Comput.}}, 47\penalty0 (3):\penalty0 365--388, 2011.

\bibitem[Geuzaine and Remacle(2009)]{Geuzaine2009}
C.~Geuzaine and J.-F. Remacle.
\newblock Gmsh: {A 3-D} finite element mesh generator with built-in pre- and
  post-processing facilities.
\newblock \emph{{Int. J. Numer. Meth. Eng.}}, 79\penalty0 (11):\penalty0
  1309--1331, 2009.

\bibitem[Hubbard(1999)]{Hubbard1999}
M.~E. Hubbard.
\newblock Multidimensional slope limiters for {MUSCL}-type finite volume
  schemes on unstructured grids.
\newblock \emph{{J. Comput. Phys.}}, 155\penalty0 (10):\penalty0 54--74, 1999.

\bibitem[Kr{\"o}ner(1997)]{Kroener1997}
D.~Kr{\"o}ner.
\newblock \emph{Numerical Schemes for Conservation Laws}.
\newblock Wiley-Teubner, 1997.

\bibitem[LeVeque(1996)]{LeVeque1996}
R.~J. LeVeque.
\newblock High-resolution conservative algorithms for advection in
  incompressible flow.
\newblock \emph{{SIAM J. Numer. Anal.}}, 33\penalty0 (2):\penalty0 627--665,
  1996.

\bibitem[May and Berger(2013)]{May2013}
S.~May and M.~Berger.
\newblock Two-dimensional slope limiters for finite volume schemes on
  non-coordinate-aligned meshes.
\newblock \emph{{SIAM J. Sci. Comput.}}, 35\penalty0 (5):\penalty0
  A2163--A2187, 2013.

\bibitem[Toro(1997)]{Toro1997}
E.~F. Toro.
\newblock \emph{Riemann Solvers and Numerical Methods for Fluid Dynamics}.
\newblock Springer-Verlag Berlin Heidelberg, 1997.

\bibitem[{van Leer}(1979)]{vanLeer1979}
B.~{van Leer}.
\newblock Towards the ultimate conservative difference scheme. {V}. {A}
  second-order sequel to {Godunov}'s method.
\newblock \emph{{J. Comput. Phys.}}, 32\penalty0 (1):\penalty0 101--136, 1979.

\end{thebibliography}

\end{document}